\newcommand{\R}{\ensuremath{\mathbb{R}}}
\newcommand{\CC}{\mathcal{C}}
\newcommand{\CG}{\ensuremath{\mathcal{G}}}
\newcommand{\T}{\theta}
\newcommand{\al}{\alpha}
\newcommand{\be}{\beta}
\newcommand{\cl}{\mbox{\normalfont{Cl}}}
\newcommand{\A}{\ensuremath{\mathcal{A}}}
\newcommand{\B}{\ensuremath{\mathcal{B}}}
\newcommand{\C}{\ensuremath{\mathcal{C}}}
\newcommand{\D}{\ensuremath{\mathcal{D}}}
\def\p{\partial}
\def\e{\varepsilon}
\newtheorem {theorem} {Theorem} 
\newtheorem {corollary} [theorem] {Corollary}
\newtheorem {lemma} [theorem] {Lemma}
\begin{document}

\title[On the periodic solutions of a perturbed double pendulum]
{On the periodic solutions\\ of a perturbed double pendulum}

\author[J. Llibre, D.D. Novaes and M.A. Teixeira]
{Jaume Llibre$^1$, Douglas D. Novaes$^2$  and Marco Antonio
Teixeira$^2$}

\address{$^1$ Departament de Matematiques,
Universitat Aut\`{o}noma de Barcelona, 08193 Bellaterra, Barcelona,
Catalonia, Spain} \email{jllibre@mat.uab.cat}

\address{$^2$ Departamento de Matematica, Universidade
Estadual de Campinas, Caixa Postal 6065, 13083--859, Campinas, SP,
Brazil} \email{ddnovaes@gmail.com, teixeira@ime.unicamp.br}

\subjclass[2010]{37G15, 37C80, 37C30}

\keywords{periodic solution, double pendulum, averaging theory}
\date{}
\dedicatory{To Waldyr Oliva in his 80$^{th}$ Birthday}

\maketitle

\begin{abstract}
We provide sufficient conditions for the existence of periodic
solutions of the planar perturbed double pendulum with small
oscillations having equations of motion
\[
\begin{array}{l}
\ddot{\T}_{1}=-2a\T_{1}+a\T_{2}+\e F_1(t,\T_1,\dot \T_1,\T_2, \dot \T_2),\\
\ddot{\T}_{2}=2a\T_{1}-2a\T_{2}+\e F_2(t,\T_1,\dot \T_1,\T_2, \dot
\T_2),
\end{array}
\]
where $a$ and $\e$ are real parameters. The two masses of the
unperturbed double pendulum are equal, and its two stems have the
same length $l$. In fact $a=g/l$ where $g$ is the acceleration of
the gravity. Here the parameter $\e$ is small and the smooth
functions $F_1$ and $F_2$ define the perturbation which are periodic
functions in $t$ and in resonance $p$:$q$ with some of the periodic
solutions of the unperturbed double pendulum, being $p$ and $q$
positive integers relatively prime.
\end{abstract}

\section{Introduction and statement of the main results}\label{s1}

We consider a system of two point masses $m_1$ and $m_2$ moving in a
fixed plane, in which the distance between a point (called pivot)
and $m_1$ and the distance between $m_1$ and $m_2$ are fixed, and
equal to $l_1$ and $l_2$ respectively. We assume the masses do not
interact. We allow gravity to act on the masses $m_1$ and $m_2$.
This system is called the planar {\it double pendulum}.

\smallskip

The position of the double pendulum is determined by the two angles
$\T_1$ and $\T_2$ shown in Figure 1. We consider only the motion in
the vicinity of the equilibrium $\T_1= \T_2= 0$, i.e. we are only
interested in small oscillations around this equilibrium. Expanding
the Lagrangian of this system to second order in $\T_1$ and $\T_2$
and their time derivatives, the corresponding Lagrange equations of
motion are
\begin{equation}\label{pendulum}
\begin{array}{r}
(m_{1}+m_{2})l_{1}\ddot{\theta}_{1}+m_{2}l_{2}\ddot{\theta}_{2}+
(m_{1}+m_{2})g\theta_{1}=0, \vspace{0.2cm}\\
m_{2}l_{2}\ddot{\theta}_{2}+m_{2}l_{1}\ddot{\theta}_{1}+m_{2}g\theta_{2}=0.
\end{array}
\end{equation}
For more details on these equations of motion see \cite{Ir}. Here
the dot denotes derivative with respect to the time $t$.

\begin{figure}
\psfrag{A}{$l_{1}$} \psfrag{B}{$\theta_{1}$}
\psfrag{C}{$\theta_{2}$} \psfrag{D}{$l_{2}$} \psfrag{E}{$m_{1}$}
\psfrag{F}{$m_{2}$} \psfrag{G}{$g$}
\includegraphics[width=3cm]{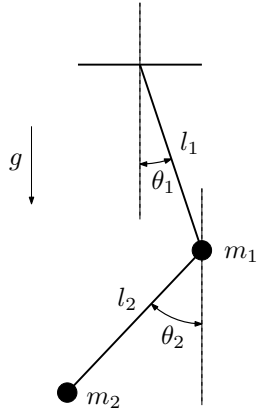}
\vskip 0cm \centerline{} \caption{\small \label{fig} The planar
double pendulum.}
\end{figure}

\smallskip

If we take $m_{1}=m_{2}=m>0$, $l_{1}=l_{2}=l>0$ and divide the above
equations by $ml$, system (\ref{pendulum}) becomes
\begin{equation}\label{pendulum2}
\begin{array}{r}
2\, \ddot{\theta}_{1}+\ddot{\theta}_{2}+2\dfrac{g}{l}\theta_{1}=0,
\vspace{0.2cm}\\
\ddot{\theta}_{1}+\ddot{\theta}_{2}+\dfrac{g}{l}\theta_{2}=0.
\end{array}
\end{equation}

Isolating $\ddot{\theta}_{1}$ and $\ddot{\theta}_{2}$ of equations
(\ref{pendulum2}) and denoting $a=g/l$, we obtain that the equations
of motion of the double pendulum with small oscillations, two equal
masses $m$ and having the two stems the same length $l$ are
\begin{equation}\label{e1}
\begin{array}{l}
\ddot{\T}_{1}=-2a\T_{1}+a\T_{2}, \vspace{0.2cm}\\
\ddot{\T}_{2}=2a\T_{1}-2a\T_{2},
\end{array}
\end{equation}
where $g$ is the acceleration of the gravity.

\smallskip

Taking a new time $\tau$ given by the rescaling $\tau= \sqrt{a}\, t$
the equations of motion \eqref{e1} become
\begin{equation}\label{e1a}
\begin{array}{l}
\T''_{1}=-2\T_{1}+\T_{2}, \vspace{0.2cm}\\
\T''_{2}=2\T_{1}-2\T_{2},
\end{array}
\end{equation}
where now the prime denotes derivative with respect to the new time
$\tau$.

\smallskip

The objective of this paper is to provide a system of nonlinear
equations whose simple zeros provide periodic solutions of the
perturbed planar double pendulum with equations of motion
\begin{equation}\label{e2}
\begin{array}{l}
\T^{''}_{1}=-2\T_{1}+\T_{2}+\e F_1(\tau,\T_1,\T'_1,\T_2,\T'_2), \vspace{0.2cm}\\
\T^{''}_{2}=2\T_{1}-2\T_{2}+\e F_2(\tau,\T_1,\T'_1,\T_2,\T'_2),
\end{array}
\end{equation}
where $\e$ is a small parameter. Here the smooth functions $F_1$ and
$F_2$ define the perturbation. These functions are periodic in
$\tau$ and in resonance $p$:$q$ with some of the periodic solutions
of the unperturbed double pendulum, being $p$ and $q$ positive
integers relatively primes. In order to present our results we need
some preliminary definitions and notations.

\smallskip

The unperturbed system \eqref{e1a} has a unique singular point, the
origin with eigenvalues
\[
\pm \sqrt{2-\sqrt{2}} \, i, \quad \pm \sqrt{2+\sqrt{2}}\, i.
\]
Consequently this system in the phase space $(\T_1,\T'_1,
\T_2,\T'_2)$ has two planes filled of periodic solutions with the
exception of the origin. These periodic solutions have periods
\[
T_1= \dfrac{2\pi}{\sqrt{2-\sqrt{2}}} \quad \mbox{or} \quad T_2=
\dfrac{2\pi}{\sqrt{2+\sqrt{2}}},
\]
according they belong to the plane associated to the eigenvectors
with eigenvalues $\pm \sqrt{2-\sqrt{2}} \, i$ or $\pm
\sqrt{2+\sqrt{2}} \, i$, respectively. We shall study which of these
periodic solutions persist for the perturbed system \eqref{e2} when
the parameter $\e$ is sufficiently small and the perturbed functions
$F_i$ for $i=1,2$ have period either $p T_1/q$, or $p T_2/q$, where
$p$ and $q$ are positive integers relatively prime.

\smallskip

We define the functions:
\begin{equation}\label{e3}
\begin{array}{l}
\CG_1(X_0,Y_0)= \displaystyle \int_0^{pT_1}
\sin\left(\sqrt{2-\sqrt{2}} \, \tau\right) \left(\sqrt{2}\, \bar
F_1+ \bar F_2\right) d\tau, \vspace{0.2cm}\\
\CG_2(X_0,Y_0)= \displaystyle  \displaystyle \int_0^{pT_1} \cos
\left(\sqrt{2-\sqrt{2}} \, \tau\right) \left(\sqrt{2}\, \bar F_1+
\bar F_2\right) d\tau,
\end{array}
\end{equation}
where
\begin{equation}\label{jj}
\bar F_k= F_k(\tau,A_1,B_1,C_1,D_1)
\end{equation}
for $k=1,2$ with
\[
\begin{array}{l}
A_1= \dfrac{1}{\sqrt{4-2 \sqrt{2}}} \left(X_0 \cos
\left(\sqrt{2-\sqrt{2}}\, \tau\right)+Y_0 \sin
\left(\sqrt{2-\sqrt{2}}\, \tau\right)\right), \vspace{0.2cm}\\
B_1= \dfrac{1}{\sqrt{2}} \left(Y_0 \cos \left(\sqrt{2-\sqrt{2}}\,
\tau\right)-X_0\sin \left(\sqrt{2-\sqrt{2}}\, \tau\right) \right), \vspace{0.2cm}\\
C_1= \dfrac{1}{\sqrt{2-\sqrt{2}}} \left(X_0 \cos
\left(\sqrt{2-\sqrt{2}}\, \tau\right)+Y_0
\sin\left(\sqrt{2-\sqrt{2}}\, \tau\right) \right), \vspace{0.2cm}\\
D_1= Y_0 \cos \left(\sqrt{2-\sqrt{2}}\, \tau\right)-X_0 \sin
   \left(\sqrt{2-\sqrt{2}}\, \tau\right).
\end{array}
\]

A zero $(X_0^*,Y_0^*)$ of the nonlinear system
\begin{equation}\label{e4}
\CG_1(X_0,Y_0)=0,\quad \CG_2(X_0,Y_0)=0,
\end{equation}
such that
\[
\det \left(\left.
\dfrac{\p(\CG_1,\CG_2)}{\p(X_0,Y_0)}\right|_{(X_0,Y_0)=
(X_0^*,Y_0^*)}\right) \neq 0,
\]
is called a {\it simple zero} of system \eqref{e4}.

\smallskip

Our main result on the periodic solutions of the perturbed double
pendulum \eqref{e2} which bifurcate from the periodic solutions of
the unperturbed double pendulum \eqref{e1a} with period $T_1$
traveled $p$ times is the following.

\begin{theorem}\label{t1}
Assume that the functions $F_k$ of the perturbed double pendulum
with equations of motion \eqref{e2} are periodic in $\tau$ of period
$pT_1/q$ with $p$ and $q$ positive integers relatively prime. Then
for $\e\neq 0$ sufficiently small and for every simple zero
$(X_0^*,Y_0^*)\neq (0,0)$ of the nonlinear system \eqref{e4}, the
perturbed double pendulum \eqref{e2} has a periodic solution
$(\T_1(\tau,\e),\T_2(\tau,\e))$ tending when $\e\to 0$ to the
periodic solution $(\T_1(\tau),\T_2(\tau))$ run $p$ times of the
unperturbed double pendulum \eqref{e1a} given by
\begin{equation}\label{e5}
\begin{array}{l}
\left( \dfrac{1}{\sqrt{4-2 \sqrt{2}}} \left(X_0^* \cos
\left(\sqrt{2-\sqrt{2}}\, \tau\right)+Y_0^* \sin
\left(\sqrt{2-\sqrt{2}}\, \tau\right)\right), \right. \vspace{0.2cm}\\
\left. \dfrac{1}{\sqrt{2-\sqrt{2}}} \left(X_0^* \cos
\left(\sqrt{2-\sqrt{2}}\, \tau\right)+Y_0^*
\sin\left(\sqrt{2-\sqrt{2}}\, \tau\right) \right) \right).
\end{array}
\end{equation}
\end{theorem}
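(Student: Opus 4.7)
The plan is to apply first-order averaging to locate $pT_1$-periodic solutions of the perturbed system that bifurcate from the two-parameter family of $T_1$-periodic orbits of \eqref{e1a} traversed $p$ times.

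First I would rewrite \eqref{e1a} as a linear first-order system $\vec X'=M\vec X$ in $\R^4$ with state $\vec X=(\T_1,\T'_1,\T_2,\T'_2)^{\mathrm{T}}$. A linear change of variables diagonalizes $M$ over $\R$ and uncouples the unperturbed flow into two planar harmonic oscillators of frequencies $\omega_1=\sqrt{2-\sqrt 2}$ and $\omega_2=\sqrt{2+\sqrt 2}$. In the new coordinates $(X_1,Y_1,X_2,Y_2)$ the family of $T_1$-periodic orbits is precisely the plane $X_2=Y_2=0$, parametrized by $(X_0,Y_0)=(X_1(0),Y_1(0))$; the corresponding orbit in the original coordinates is the candidate \eqref{e5}, whose four components are exactly the functions $A_1,B_1,C_1,D_1$ appearing in \eqref{jj}.

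Next I would apply variation of parameters with the unperturbed fundamental matrix $\Phi(\tau)$ (block-diagonal with two rotation blocks in the diagonal basis) to bring \eqref{e2} into the standard averaging form $\vec Y'=\e\,\vec H(\tau,\vec Y,\e)$, with $\vec H$ being $pT_1/q$-periodic in $\tau$. The $pT_1$-periodic solutions of \eqref{e2} correspond to zeros of the displacement function, which a standard computation gives as
\[
\vec\Delta(\vec Y_0,\e)=\bigl(I-\Phi(pT_1)\bigr)\vec Y_0+\e\int_0^{pT_1}\Phi(\tau)^{-1}\vec F(\tau,\Phi(\tau)\vec Y_0)\,d\tau+O(\e^2),
\]
with $\vec F=(0,F_1,0,F_2)$. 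The key observation is that $\Phi(pT_1)$ is the identity on the $T_1$-plane (since $\omega_1 pT_1=2\pi p$) and equals the rotation by $2\pi p\,\omega_2/\omega_1$ on the transverse plane. Because $\omega_2/\omega_1=\sqrt{(2+\sqrt 2)/(2-\sqrt 2)}$ is irrational, this rotation is never the identity, so $I-\Phi(pT_1)$ is invertible on the transverse plane. A Lyapunov--Schmidt reduction (equivalently, the implicit function theorem applied to the transverse block) then lets me solve for $(X_2(0),Y_2(0))=O(\e)$, reducing the bifurcation equation to its projection on the $T_1$-plane, which to leading order in $\e$ is the projection of the integral above onto the $T_1$-eigenspace.

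Finally, a direct computation identifies this projected bifurcation function with $(\CG_1(X_0,Y_0),\CG_2(X_0,Y_0))$ up to a nonzero constant factor: the combination $\sqrt 2\,\bar F_1+\bar F_2$ appearing in \eqref{e3} arises as the pairing with the left eigenvector of $M$ associated with $i\omega_1$, while the factors $\sin\bigl(\sqrt{2-\sqrt 2}\,\tau\bigr)$ and $\cos\bigl(\sqrt{2-\sqrt 2}\,\tau\bigr)$ come from the action of $\Phi(\tau)^{-1}$ on that eigenvector. A simple zero $(X_0^*,Y_0^*)$ of \eqref{e4}, together with the non-resonance that makes the transverse linear map invertible, yields a nondegenerate zero of the full reduced map; the averaging theorem (e.g.\ the Buic\u a--Llibre version) then produces a $pT_1$-periodic solution of \eqref{e2} converging to \eqref{e5} as $\e\to 0$. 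The main technical step is the explicit identification of the projected bifurcation function with $(\CG_1,\CG_2)$, which requires careful bookkeeping through the diagonalization and the inverse of $\Phi$.
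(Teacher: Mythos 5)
Your proposal is correct and follows essentially the same route as the paper: pass to the real Jordan (diagonalized) coordinates, reduce via the displacement function/Lyapunov--Schmidt to a bifurcation equation on the $T_1$-plane, and identify it with $(\CG_1,\CG_2)$ up to a nonzero constant; the only difference is that you unpack the averaging theorem of Malkin--Roseau/Buic\u{a}--Fran\c{c}oise--Llibre rather than quoting it, and your transverse nondegeneracy via the irrationality of $\omega_2/\omega_1=1+\sqrt{2}$ is exactly the paper's condition $4\sin^2\left(\sqrt{2}\,\pi p\right)\neq 0$.
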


Theorem \ref{t1} is proved in section \ref{s3}. Its proof is based
in the averaging theory for computing periodic solutions, see the
appendix.

\smallskip

We provide an application of Theorem \ref{t1} in the following
corollary, which will be proved in section \ref{s4}.

\begin{corollary}\label{c1}
If $F_2(\T_1,\T'_1,\T_2,\T'_2)= \left(1-\T_1^2\right) \sin
\left(\sqrt{2-\sqrt{2}}\, t\right)$ and $F_1(\T_1,\T'_1,\T_2,\T'_2)=
0$, then the differential equation \eqref{e2} for $\e\neq 0$
sufficiently small has two periodic solutions
$(\T_1(\tau,\e),\T_2(\tau,\e))$ tending when $\e\to 0$ to the two
periodic solutions $(\T_1(\tau),\T_2(\tau))$ of the unperturbed
double pendulum \eqref{e1a} given by \eqref{e5} with
\[
(X_0^*,Y_0^*)= \left(2 \sqrt{2 \left(2-\sqrt{2}\right)},0   \right)
\,\,\, \mbox{and} \,\,\, (X_0^*,Y_0^*)= \left(0, 2 \sqrt{\frac{2}{3}
\left(2-\sqrt{2}\right)} \right),
\]
respectively.
\end{corollary}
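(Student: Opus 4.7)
The plan is to apply Theorem \ref{t1} directly by verifying that the two points $(X_0^*,Y_0^*)$ listed in the statement are simple zeros of the nonlinear system \eqref{e4} for this particular perturbation. First, since $F_2$ depends on $\tau$ only through $\sin(\sqrt{2-\sqrt{2}}\,\tau)$, it has period $T_1=2\pi/\sqrt{2-\sqrt{2}}$, so we are in the resonance case $p=q=1$ and the integrals defining $\CG_1,\CG_2$ in \eqref{e3} run over $[0,T_1]$. Since $F_1\equiv 0$ we have $\bar F_1\equiv 0$, and only $\bar F_2=(1-A_1^2)\sin(\sqrt{2-\sqrt{2}}\,\tau)$ contributes.

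Next I would substitute the explicit expression for $A_1$ given after \eqref{jj} and perform the change of variable $u=\sqrt{2-\sqrt{2}}\,\tau$. Both integrals then become linear combinations of the elementary integrals $\int_0^{2\pi}\sin^2 u\,du=\pi$, $\int_0^{2\pi}\sin^4 u\,du=3\pi/4$, $\int_0^{2\pi}\sin^2 u\cos^2 u\,du=\pi/4$, together with integrals of odd powers that vanish by parity. A routine simplification puts system \eqref{e4} into the equivalent form
\begin{equation*}
1-\frac{X_0^2+3Y_0^2}{8(2-\sqrt{2})}=0, \qquad X_0\,Y_0=0.
\end{equation*}
The second equation forces $X_0=0$ or $Y_0=0$; substituting each case into the first gives $X_0^2=8(2-\sqrt{2})$ or $Y_0^2=8(2-\sqrt{2})/3$, which, after taking the positive square root, yields precisely the two points listed in the statement.

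Finally, to confirm that both zeros are simple, I would compute the $2\times 2$ Jacobian of $(\CG_1,\CG_2)$. At $(X_0^*,0)$ the Jacobian comes out diagonal, and at $(0,Y_0^*)$ antidiagonal; in each case the nonzero entries are proportional to the nonvanishing coordinate, so the determinant is a nonzero multiple of $(X_0^*)^2$ or $(Y_0^*)^2$ and hence nonzero. Theorem \ref{t1} then delivers, for every $\e\ne 0$ sufficiently small, the two periodic solutions of \eqref{e2} converging as $\e\to 0$ to the unperturbed orbits \eqref{e5} specified by these $(X_0^*,Y_0^*)$. The only mildly delicate point in the whole argument is keeping track of the prefactors $1/\sqrt{4-2\sqrt{2}}$ and $1/\sqrt{2-\sqrt{2}}$ coming from the definition of $A_1$ and from the Jacobian of the substitution $u=\sqrt{2-\sqrt{2}}\,\tau$; conceptually nothing beyond Theorem \ref{t1} is needed.
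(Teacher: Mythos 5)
Your proposal is correct and follows essentially the same route as the paper: compute the averaged functions $\CG_1,\CG_2$ for this perturbation, reduce system \eqref{e4} to $X_0^2+3Y_0^2=8\left(2-\sqrt{2}\right)$ and $X_0Y_0=0$, verify that the listed zeros are simple, and invoke Theorem \ref{t1}. The only cosmetic difference is that the paper also records the two negative-sign zeros and remarks that they are merely different initial conditions of the same unperturbed periodic solutions, which is how it justifies counting exactly two periodic solutions.
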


Now we define the functions:
\begin{equation}\label{e3a}
\begin{array}{l}
\CG^1(Z_0,W_0)= \displaystyle \int_0^{pT_2}
\sin\left(\sqrt{2+\sqrt{2}} \, \tau\right) \left(-\sqrt{2}\, \bar
F_1+ \bar F_2\right) d\tau, \vspace{0.2cm}\\
\CG^2(Z_0,W_0)= \displaystyle  \displaystyle \int_0^{pT_2}\cos
\left(\sqrt{2+\sqrt{2}} \, \tau\right) \left(-\sqrt{2}\, \bar F_1+
\bar F_2\right) d\tau,
\end{array}
\end{equation}
where $\bar F_k= F_k(\tau,A_2,B_2,C_2,D_2)$ for $k=1,2$ with
\[
\begin{array}{l}
A_2= -\dfrac{1}{\sqrt{4+2 \sqrt{2}}} \left(Z_0 \cos
\left(\sqrt{2+\sqrt{2}}\, \tau\right)+W_0 \sin
\left(\sqrt{2+\sqrt{2}}\, \tau\right)\right), \vspace{0.2cm}\\
B_2= \dfrac{1}{\sqrt{2}} \left(-W_0 \cos \left(\sqrt{2+\sqrt{2}}\,
\tau\right)+Z_0\sin \left(\sqrt{2+\sqrt{2}}\, \tau\right) \right),
\vspace{0.2cm}\\
C_2= \dfrac{1}{\sqrt{2+\sqrt{2}}} \left(Z_0 \cos
\left(\sqrt{2+\sqrt{2}}\, \tau\right)+W_0
\sin\left(\sqrt{2+\sqrt{2}}\, \tau\right) \right), \vspace{0.2cm}\\
D_2= W_0 \cos \left(\sqrt{2+\sqrt{2}}\, \tau\right)-Z_0 \sin
\left(\sqrt{2+\sqrt{2}}\, \tau\right).
\end{array}
\]
Consider the nonlinear system
\begin{equation}\label{e4a}
\CG^1(Z_0,W_0)=0,\quad \CG^2(Z_0,W_0)=0.
\end{equation}

\smallskip

Our main result on the periodic solutions of the perturbed double
pendulum \eqref{e2} which bifurcate from the periodic solutions of
the unperturbed double pendulum \eqref{e1a} with period $T_2$
traveled $p$ times is the following.

\begin{theorem}\label{t1a}
Assume that the functions $F_k$ of the perturbed double pendulum
with equations of motion \eqref{e2} are periodic in $\tau$ of period
$pT_2/q$ with $p$ and $q$ positive integers relatively prime. Then
for $\e\neq 0$ sufficiently small and for every simple zero
$(Z_0^*,W_0^*)\neq (0,0)$ of the nonlinear system \eqref{e4a}, the
perturbed double pendulum \eqref{e2} has a periodic solution
$(\T_1(\tau,\e),\T_2(\tau,\e))$ tending when $\e\to 0$ to the
periodic solution $(\T_1(\tau),\T_2(\tau))$ run $p$ times of the
unperturbed double pendulum \eqref{e1a} given by
\begin{equation}\label{e5a}
\begin{array}{l}
\left(- \dfrac{1}{\sqrt{4+2 \sqrt{2}}} \left(Z_0^* \cos
\left(\sqrt{2+\sqrt{2}}\, \tau\right)+W_0^* \sin
\left(\sqrt{2+\sqrt{2}}\, \tau\right)\right), \right. \vspace{0.2cm}\\
\left. \dfrac{1}{\sqrt{2+\sqrt{2}}} \left(Z_0^* \cos
\left(\sqrt{2+\sqrt{2}}\, \tau\right)+W_0^*
\sin\left(\sqrt{2+\sqrt{2}}\, \tau\right) \right) \right).
\end{array}
\end{equation}
\end{theorem}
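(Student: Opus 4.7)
The argument follows the same pattern as the proof of Theorem \ref{t1}, with the frequency $\sqrt{2-\sqrt 2}$ and period $T_1$ replaced throughout by $\sqrt{2+\sqrt 2}$ and $T_2$, respectively. I outline the main steps and single out the one point that must be rechecked in this setting.

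First I rewrite \eqref{e2} as a first-order system on $\R^4$ with coordinates $(\T_1,\T'_1,\T_2,\T'_2)$, and perform a linear change of variables that conjugates the unperturbed linear part to block-diagonal form: a $2\times 2$ rotational block of angular speed $\sqrt{2-\sqrt 2}$ on a plane I label $(X,Y)$, and another of angular speed $\sqrt{2+\sqrt 2}$ on a plane I label $(Z,W)$. The quadruple $(A_2,B_2,C_2,D_2)$ appearing in \eqref{jj} is precisely the image in the original variables of the unperturbed orbit with initial data $(X_0,Y_0,Z_0,W_0)=(0,0,Z_0,W_0)$, so that the periodic orbits of period $T_2$ of \eqref{e1a} are exactly those of the form \eqref{e5a}. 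A variation-of-constants substitution then puts \eqref{e2} in the standard slow form, and the first-order averaging theorem stated in the appendix is applied over the interval $[0,pT_2]$, which is a common period of both the perturbation (since $pT_2=q\cdot(pT_2/q)$) and the unperturbed orbits in the $(Z,W)$-plane. A direct computation, mirroring the one in the proof of Theorem \ref{t1} with the new eigenvectors, shows that the two components of the averaged bifurcation function along the $(Z,W)$-plane are a nonzero constant multiple of $\CG^1$ and $\CG^2$ given in \eqref{e3a}.

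The key verification that differs from the $T_1$ case concerns the transversal block of the Jacobian of the averaged bifurcation function. The unperturbed time-$pT_2$ Poincar\'e map restricted to the $(X,Y)$-plane is the rotation through the angle
\[
p T_2 \sqrt{2-\sqrt{2}} \;=\; 2\pi\, p \sqrt{\dfrac{2-\sqrt{2}}{2+\sqrt{2}}} \;=\; 2\pi\, p\,(\sqrt{2}-1),
\]
which is an irrational multiple of $2\pi$; hence the identity minus this rotation is invertible on the $(X,Y)$-plane. Consequently the $(X,Y)$-components of the full four-dimensional bifurcation equation can be eliminated by the implicit function theorem, reducing the problem to solving the two-dimensional system \eqref{e4a} in $(Z_0,W_0)$. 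The hypothesis that $(Z_0^*,W_0^*)\neq(0,0)$ is a simple zero of \eqref{e4a} then furnishes, for every $\e\neq 0$ sufficiently small, a $pT_2$-periodic solution of \eqref{e2} which tends as $\e\to 0$ to the unperturbed orbit \eqref{e5a}. The main obstacle is therefore the bookkeeping of this partial non-degeneracy rather than any new analytic ingredient beyond those already used for Theorem \ref{t1}.
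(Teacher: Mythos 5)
Your proposal is correct and follows exactly the route the paper intends: its proof of Theorem \ref{t1a} is simply declared ``completely analogous'' to that of Theorem \ref{t1}, i.e.\ apply the appendix averaging theorem with the roles of the two invariant planes exchanged, and you correctly identify and verify the only point that needs rechecking, namely that the transversal $(X,Y)$-block of $M^{-1}(0)-M^{-1}(pT_2)$ is invertible because the rotation angle $pT_2\sqrt{2-\sqrt{2}}=2\pi p(\sqrt{2}-1)$ is an irrational multiple of $2\pi$ (determinant $4\sin^2(\sqrt{2}\,\pi p)\neq 0$). Your identification of $(A_2,B_2,C_2,D_2)$ as the pullback of the $(Z,W)$-plane orbits and of the averaged functions as nonzero multiples of $\CG^1,\CG^2$ also matches the computation one obtains by mirroring the proof of Theorem \ref{t1}.
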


Theorem \ref{t1a} is also proved in section \ref{s3}.

\smallskip

Now we provide an application of Theorem \ref{t1a} in the next
corollary, which will be proved in section \ref{s4}.

\begin{corollary}\label{c2}
If $F_1(\T_1,\T'_1,\T_2,\T'_2)= \T'_2+ \T_1^2 \cos
\left(\sqrt{2+\sqrt{2}}\, t\right)$ and $F_2(\T_1,\T'_1,\T_2,\T'_2)=
0$, then the differential equation \eqref{e2} for $\e\neq 0$
sufficiently small has one periodic solution $(\T_1(\tau,\e),
\T_2(\tau,\e))$ tending when $\e\to 0$ to the periodic solution
$(\T_1(\tau), \T_2(\tau))$ of the unperturbed double pendulum
\eqref{e1a} given by \eqref{e5a} with $(Z_0^*,W_0^*)= \left(0, -8
\left(2+\sqrt{2}\right)\right)$.
\end{corollary}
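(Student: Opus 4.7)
The plan is to apply Theorem \ref{t1a} directly. The perturbation $F_1(\T_1,\T'_1,\T_2,\T'_2)=\T'_2+\T_1^2\cos(\sqrt{2+\sqrt{2}}\,\tau)$ depends on time only through $\cos(\sqrt{2+\sqrt{2}}\,\tau)$, whose minimal period is $T_2=2\pi/\sqrt{2+\sqrt{2}}$, so the resonance hypothesis of Theorem \ref{t1a} is met with $p=q=1$. Writing $\omega:=\sqrt{2+\sqrt{2}}$ and substituting the parametrization of $(A_2,B_2,C_2,D_2)$ into $F_1$ (with $F_2\equiv 0$), one obtains
\[
\bar F_1=D_2+A_2^2\cos(\omega\tau),\qquad \bar F_2\equiv 0,
\]
where, after expansion, $\bar F_1$ is a trigonometric polynomial of degree at most three in $\cos(\omega\tau)$ and $\sin(\omega\tau)$ whose coefficients are polynomial in $(Z_0,W_0)$.

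The second step is to compute the bifurcation functions $\CG^1,\CG^2$ in \eqref{e3a} by integrating $-\sqrt{2}\,\bar F_1$ against $\sin(\omega\tau)$ and $\cos(\omega\tau)$ term by term on $[0,T_2]$. The only elementary integrals needed over a full period are
\[
\int_0^{T_2}\!\cos^2(\omega\tau)\,d\tau=\int_0^{T_2}\!\sin^2(\omega\tau)\,d\tau=\tfrac{T_2}{2},\quad \int_0^{T_2}\!\cos^4(\omega\tau)\,d\tau=\tfrac{3T_2}{8},
\]
together with $\int_0^{T_2}\cos^2(\omega\tau)\sin^2(\omega\tau)\,d\tau=T_2/8$; an integrand of the form $\cos^a(\omega\tau)\sin^b(\omega\tau)$ with either $a$ or $b$ odd contributes zero. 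A direct expansion of $-\sqrt{2}\,\bar F_1$ and integration will then yield an expression for $\CG^1$ containing only the monomials $Z_0$ and $Z_0W_0$, and an expression for $\CG^2$ containing only $W_0$, $Z_0^2$ and $W_0^2$.

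From there the system \eqref{e4a} is easy to solve. The equation $\CG^1=0$ factors as $Z_0$ times a linear expression in $W_0$: one branch produces $W_0=4(2+\sqrt{2})$, which substituted into $\CG^2=0$ forces $Z_0^2<0$ and contributes no real solution; the other branch $Z_0=0$ reduces $\CG^2=0$ to a quadratic in $W_0$ whose only nonzero root is the asserted $W_0^*=-8(2+\sqrt{2})$, the root $W_0=0$ being the trivial zero excluded by Theorem \ref{t1a}. At the point $(0,-8(2+\sqrt{2}))$ both off-diagonal entries of the Jacobian of $(\CG^1,\CG^2)$ are proportional to $Z_0$, so the Jacobian determinant reduces to the product of its two diagonal entries, which a short calculation confirms is nonzero. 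Applying Theorem \ref{t1a} then delivers the claimed periodic solution, and formula \eqref{e5a} evaluated at $(Z_0^*,W_0^*)=(0,-8(2+\sqrt{2}))$ produces the advertised limiting orbit. No step hides a conceptual obstacle; the only care required is algebraic bookkeeping, in particular the repeated use of $4+2\sqrt{2}=2(2+\sqrt{2})$ when simplifying the cubic contributions to $\CG^1$ and $\CG^2$.
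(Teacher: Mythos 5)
Your proposal is correct and follows essentially the same route as the paper: substitute the parametrization $(A_2,B_2,C_2,D_2)$ into $F_1$, compute the averaged functions $\CG^1,\CG^2$ of \eqref{e3a} by elementary trigonometric integrals, find that the only admissible real simple zero is $(Z_0^*,W_0^*)=\bigl(0,-8(2+\sqrt{2})\bigr)$ (the branch $W_0=4(2+\sqrt{2})$ forcing $Z_0^2<0$ and $(0,0)$ being excluded), and apply Theorem \ref{t1a}. Your explicit check that the Jacobian is diagonal and nondegenerate at this zero is a detail the paper leaves implicit, and your computed values agree with the paper's formulas up to the harmless overall normalizing constant.
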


\section{Proof of Theorems \ref{t1} and \ref{t1a}}\label{s3}

Introducing the variables $(x,y,z,w)= (\T_1,\T'_1,\T_2,\T'_2)$ we
write the differential system of the perturbed double pendulum
\eqref{e2} as a first--order differential system defined in $\R^4$.
Thus we have the differential system
\begin{equation}\label{d1}
\begin{array}{l}
x' = y,\\
y' = -2x+z+\e F_1(\tau,x,y,z,w),\\
z' = w,\\
w' = 2x-2z+\e F_2(\tau,x,y,z,w).
\end{array}
\end{equation}
System \eqref{d1} with $\e=0$ is equivalent to the unperturbed
double pendulum system \eqref{e1a}, called in what follows simply by
the {\it unperturbed system}, otherwise we have the {\it perturbed
system}.

\smallskip

We shall write system \eqref{d1} in such a way that the linear part
at the origin will be in its real normal Jordan form. Then, doing
the change of variables $(\tau,x,y,z,w)\to (\tau,X,Y,Z,W)$ given by
\begin{equation}\label{nn}
\left(
\begin{array}{c}
X\\
Y\\
Z\\
W
\end{array}
\right)= \left(
\begin{array}{cccc}
 \sqrt{1-\dfrac{1}{\sqrt{2}}} & 0 & \dfrac{\sqrt{2-\sqrt{2}}}{2} & 0 \\
 0 & \dfrac{1}{\sqrt{2}} & 0 & \dfrac{1}{2} \\
 -\sqrt{1+\dfrac{1}{\sqrt{2}}} & 0 & \dfrac{\sqrt{2+\sqrt{2}}}{2} & 0 \\
 0 & -\dfrac{1}{\sqrt{2}} & 0 & \dfrac{1}{2}
\end{array}
\right)\left(
\begin{array}{c}
x\\
y\\
z\\
w
\end{array}
\right),
\end{equation}
the differential system \eqref{d1} becomes
\begin{equation}\label{d2}
\begin{array}{l}
X' = \sqrt{2-\sqrt{2}}\, Y, \vspace{0.2cm}\\
Y' = -\sqrt{2-\sqrt{2}}\, X+ \e \dfrac{1}{2} \left(\sqrt{2}\, \tilde
F_1+\tilde F_2\right),\vspace{0.2cm}\\
Z' = \sqrt{2+\sqrt{2}}\, W,\vspace{0.2cm}\\
W' = -\sqrt{2+\sqrt{2}}\, Z+\e \dfrac{1}{2} \left(\tilde
F_2-\sqrt{2}\, \tilde F_1\right),
\end{array}
\end{equation}
where $\tilde F_i(\tau,X,Y,Z,W)=F_i(\tau,\A,\B,\C,\D)$ for $i=1,2$
with
\[
\begin{array}{l}
\A= \dfrac{X}{\sqrt{4-2 \sqrt{2}}}-\dfrac{Z}{\sqrt{2
\left(2+\sqrt{2}\right)}},\vspace{0.2cm}\\
\B= \dfrac{Y-W}{\sqrt{2}},\vspace{0.2cm}\\
\C= \dfrac{X}{\sqrt{2-\sqrt{2}}}+\dfrac{Z}{\sqrt{2+\sqrt{2}}},\vspace{0.2cm}\\
\D= Y+W.
\end{array}
\]
Note that the linear part of the differential system \eqref{d2} at
the origin is in its real normal Jordan form.

\begin{lemma}\label{L1}
The periodic solutions of the differential system \eqref{d2} with
$\e=0$ are
\begin{equation}\label{d3}
\begin{array}{l}
X(\tau)= X_0 \cos \left(\sqrt{2-\sqrt{2}}\, \tau \right) + Y_0 \sin
\left(\sqrt{2-\sqrt{2}}\, \tau \right),\\
Y(\tau)= Y_0 \cos \left(\sqrt{2-\sqrt{2}}\, \tau \right)-X_0 \sin
\left(\sqrt{2-\sqrt{2}}\, \tau \right),\\
Z(\tau)= 0,\\
W(\tau)= 0,
\end{array}
\end{equation}
of period $T_1$, and
\begin{equation}\label{d4}
\begin{array}{l}
X(\tau)= 0,\\
Y(\tau)= 0,\\
Z(\tau)= Z_0 \cos \left(\sqrt{2+\sqrt{2}}\, \tau \right)+W_0 \sin
\left(\sqrt{2+\sqrt{2}}\, \tau \right),\\
W(\tau)= W_0 \cos \left(\sqrt{2+\sqrt{2}}\, \tau\right)-Z_0 \sin
\left(\sqrt{2+\sqrt{2}}\, \tau \right),
\end{array}
\end{equation}
of period $T_2$.
\end{lemma}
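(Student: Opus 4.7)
The plan is to exploit the block-diagonal structure of the linearized system. With $\e=0$, the system \eqref{d2} splits into two independent planar linear subsystems, one in the variables $(X,Y)$ with angular frequency $\omega_1=\sqrt{2-\sqrt{2}}$ and one in $(Z,W)$ with angular frequency $\omega_2=\sqrt{2+\sqrt{2}}$. Each subsystem has a pair of purely imaginary eigenvalues $\pm i\omega_j$ and is already in real normal Jordan form, so direct integration (differentiating $X$ and substituting, or equivalently using $d(X^2+Y^2)/d\tau=0$ and $d(Z^2+W^2)/d\tau=0$) immediately yields the two one-parameter families of circular motions written in \eqref{d3} and \eqref{d4}, with minimal periods $T_1=2\pi/\omega_1$ and $T_2=2\pi/\omega_2$ respectively.

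The general solution of \eqref{d2} with $\e=0$ is then the cartesian product of an orbit of the first subsystem and an orbit of the second. I would next argue that such a product orbit is periodic in $\tau$ if and only if one of the two component orbits is identically zero. Indeed, if both components are nontrivial, periodicity of the full solution requires the existence of a common period, i.e.\ $T_1/T_2\in\Q$, equivalently $\omega_1/\omega_2\in\Q$.

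The one real computation is to rule this out. Squaring the ratio gives
\[
\left(\frac{\omega_1}{\omega_2}\right)^2=\frac{2-\sqrt{2}}{2+\sqrt{2}}=\frac{(2-\sqrt{2})^2}{2}=3-2\sqrt{2},
\]
which is irrational, so $\omega_1/\omega_2$ is irrational as well. Hence no quasi-periodic combination with both amplitudes nonzero can close up, and the only periodic orbits are those in the invariant plane $\{Z=W=0\}$, giving family \eqref{d3} of period $T_1$, or in the invariant plane $\{X=Y=0\}$, giving family \eqref{d4} of period $T_2$.

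There is no real obstacle here; the content of the lemma is the splitting of the linear part plus the irrationality of the frequency ratio. The only point to present carefully is the final irrationality check, since the rest is a direct integration that the reader can verify by inspection against \eqref{d2}.
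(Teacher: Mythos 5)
Your proof is correct and takes essentially the same route as the paper, whose entire argument is the remark that system \eqref{d2} with $\e=0$ is linear so ``the proof follows easily''; you have simply supplied the details that this remark leaves implicit. In particular, your computation $(\omega_1/\omega_2)^2=(2-\sqrt{2})/(2+\sqrt{2})=3-2\sqrt{2}$, hence irrational, is exactly the point needed to exclude periodic solutions with both planar components nontrivial, and it is carried out correctly.
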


\begin{proof}
Since system \eqref{d2} with $\e=0$ is a linear differential system,
the proof follows easily.
\end{proof}

\begin{proof}[Proof of Theorem \ref{t1}]
Assume that the functions $F_k$ of the perturbed double pendulum
with equations of motion \eqref{e2} are periodic in $\tau$ of period
$pT_1/q$ with $p$ and $q$ positive integers relatively prime. Then
we can think that system \eqref{e2} is periodic in $\tau$ of period
$pT_1$. Thinking in this way the differential system and the
periodic solutions \eqref{d3} have the same period $pT_1$.

\smallskip

We shall apply Theorem \ref{tt} of the appendix to the differential
system \eqref{d2}. We note that system \eqref{d2} can be written as
system \eqref{eq:4} taking
\[
{\bf x}=\left(
\begin{array}{c}
X\vspace{0.2cm}\\
Y\vspace{0.2cm}\\
Z\vspace{0.2cm}\\
W
\end{array}
\right), \quad t=\tau, \quad G_0(t,{\bf x})=\left(
\begin{array}{c}
\sqrt{2-\sqrt{2}}\, Y,\vspace{0.2cm}\\
-\sqrt{2-\sqrt{2}}\, X,\vspace{0.2cm}\\
\sqrt{2+\sqrt{2}}\, W,\vspace{0.2cm}\\
-\sqrt{2+\sqrt{2}}\, Z
\end{array}
\right),
\]
\[
G_1(t,{\bf x})=\left(
\begin{array}{c}
0 \vspace{0.2cm}\\
\dfrac{1}{2} \left(\sqrt{2}\, \tilde F_1+\tilde F_2\right) \vspace{0.2cm}\\
0 \vspace{0.2cm}\\
\dfrac{1}{2} \left(\tilde F_2-\sqrt{2}\, \tilde F_1\right)
\end{array}
\right) \quad \mbox{and} \quad G_2(t,{\bf x,\e})=\left(
\begin{array}{c}
0 \vspace{0.2cm}\\
0 \vspace{0.2cm}\\
0 \vspace{0.2cm}\\
0
\end{array}
\right).
\]

We shall study which periodic solutions \eqref{d3} of the
unperturbed system \eqref{d2} with $\e=0$ can be continued to
periodic solutions of the unperturbed system \eqref{d2} for $\e\neq
0$ sufficiently small.

\smallskip

We shall describe the different elements which appear in the
statement of Theorem \ref{tt} in the particular case of the
differential system \eqref{d2}. Thus we have that $\Omega= \R^4$,
$k=2$ and $n=4$. Let $r_1>0$ be arbitrarily small and let $r_2>0$ be
arbitrarily large. We take the open and bounded subset $V$ of the
plane $Z=W=0$ as
\[
V= \{ (X_0,Y_0,0,0)\in \R^4 : r_1< \sqrt{X_0^2+Y_0^2} < r_2 \}.
\]
As usual $\cl(V)$ denotes the closure of $V$. If $\al=(X_0,Y_0)$,
then we can identify $V$ with the set
\[
\{ \al\in \R^2 : r_1< ||\al|| < r_2 \},
\]
here $|| \cdot ||$ denotes the Euclidean norm of $\R^2$. The
function $\be: \cl(V)\to \R^2$ is $\be(\al)= (0,0)$. Therefore, in
our case the set
\[
\mathcal{Z}=\left\{ {\bf z}_{\alpha}=\left( \alpha,
\beta(\alpha)\right),~~\alpha\in \cl(V) \right\}= \{
(X_0,Y_0,0,0)\in \R^4 : r_1\leq \sqrt{X_0^2+Y_0^2} \leq r_2 \}.
\]
Clearly for each ${\bf z}_{\alpha}\in \mathcal{Z}$ we can consider
the periodic solution ${\bf x}(\tau, {\bf z}_{\alpha})= (
X(\tau),Y(\tau)$, $0,0)$ given by \eqref{d3} of period $pT_1$.

\smallskip

Computing the fundamental matrix $M_{{\bf z}_{\alpha}}(\tau)$ of the
linear differential system \eqref{d2} with $\e=0$ associated to the
$T$--periodic solution ${\bf z}_{\alpha}= (X_0,Y_0,0,0)$ such that
$M_{{\bf z}_{\alpha}}(0)$ be the identity of $\R^4$, we get that
$M(\tau)= M_{{\bf z}_{\alpha}}(\tau)$ is equal to
\[
\left(
\begin{array}{cccc}
\cos \left(\sqrt{2-\sqrt{2}}\, \tau\right) & \sin
\left(\sqrt{2-\sqrt{2}}\, \tau\right) & 0 & 0 \\
-\sin \left(\sqrt{2-\sqrt{2}}\, \tau\right) & \cos
\left(\sqrt{2-\sqrt{2}}\, \tau\right) & 0 & 0 \\
0 & 0 & \cos \left(\sqrt{2+\sqrt{2}}\, \tau\right) & \sin
\left(\sqrt{2+\sqrt{2}}\, \tau\right) \\
0 & 0 & -\sin \left(\sqrt{2+\sqrt{2}}\, \tau\right) & \cos
\left(\sqrt{2+\sqrt{2}}\, \tau\right)
\end{array}
\right).
\]
Note that the matrix $M_{{\bf z}_{\alpha}}(\tau)$ does not depend of
the particular periodic solution ${\bf x}(\tau, {\bf z}_{\alpha})$.
Since the matrix
\[
M^{-1}(0)-M^{-1}(pT_1)=\left(
\begin{array}{cccc}
 0 & 0 & 0 & 0 \\
 0 & 0 & 0 & 0 \\
 0 & 0 & 2 \sin ^2\left(\sqrt{2}\, \pi \right) & \sin \left(2
 \sqrt{2}\, \pi \right) \\
 0 & 0 & -\sin \left(2 \sqrt{2}\, \pi \right) & 2 \sin
 ^2\left(\sqrt{2}\, \pi \right)
\end{array}
\right),
\]
satisfies the assumptions of statement (ii) of Theorem \ref{tt}
because the determinant
\[
\left|
\begin{array}{ll}
2 \sin ^2\left(\sqrt{2}\, \pi \right) & \sin \left(2 \sqrt{2}\, \pi
\right) \\
-\sin \left(2 \sqrt{2}\, \pi \right) & 2 \sin ^2\left(\sqrt{2}\, \pi
\right)
\end{array}
\right|= 4 \sin ^2\left(\sqrt{2} \pi \right)\neq 0,
\]
we can apply this theorem to system \eqref{d2}.

\smallskip

Now $\xi: \R^4\to \R^2$ is $\xi(X,Y,Z,W)= (X,Y)$. We calculate the
function
\[
\CG(X_0,Y_0)=\CG(\alpha)=\xi\left( \dfrac{1}{pT_1} \displaystyle
\int _0^{pT_1} M_{{\bf z}_{\alpha}}^{-1}(\tau)G_1(\tau,{\bf
x}(\tau,{\bf z}_{\alpha})) d\tau\right),
\]
and we obtain
\[
\left( \begin{array}{c} \CG_1(X_0,Y_0)\vspace{0.2cm}\\
\CG_2(X_0,Y_0)
\end{array}
\right)= \left(
\begin{array}{c}
\displaystyle -\dfrac{\sqrt{2-\sqrt{2}}}{4 \pi } \int_0^{pT_1}
\sin\left(\sqrt{2-\sqrt{2}} \, \tau\right) \left(\sqrt{2}\, \bar
F_1+ \bar F_2\right) d\tau \vspace{0.2cm}\\
\displaystyle \frac{\sqrt{2-\sqrt{2}}}{4 \pi } \int_0^{pT_1}\cos
\left(\sqrt{2-\sqrt{2}} \, \tau\right) \left(\sqrt{2}\, \bar F_1+
\bar F_2\right) d\tau
\end{array}
\right)
\]
where the functions of $\bar F_k$ for $k=1,2$  are the ones given in
\eqref{jj}. Then, by Theorem \ref{tt} we have that for every simple
zero $(X_0^*,Y_0^*)\in V$ of the system of nonlinear functions
\begin{equation}\label{hh}
\CG_1(X_0,Y_0)=0, \quad \CG_2(X_0,Y_0)=0,
\end{equation}
we have a periodic solution $(X,Y,Z,W)(\tau,\e)$ of system
\eqref{d2} such that
\[
(X,Y,Z,W)(\tau,\e)\to (X_0^*,Y_0^*,0,0)\quad \mbox{as $\e\to 0$.}
\]
Note that system \eqref{hh} is equivalent to system \eqref{e4},
because both equations only differs in a non--zero multiplicative
constant.

\smallskip

Going back through the change of coordinates \eqref{nn} we get a
periodic solution $(x,y,z,w)(\tau,\e)$ of system \eqref{d3} such
that
\[
\left(
\begin{array}{c}
x(\tau,\e) \vspace{0.2cm}\\
y(\tau,\e) \vspace{0.2cm}\\
z(\tau,\e) \vspace{0.2cm}\\
w(\tau,\e)
\end{array}
\right)\to \left(
\begin{array}{c}
\dfrac{1}{\sqrt{4-2 \sqrt{2}}} \left(X_0^* \cos
\left(\sqrt{2-\sqrt{2}}\, \tau\right)+Y_0^* \sin
\left(\sqrt{2-\sqrt{2}}\, \tau\right)\right) \vspace{0.2cm}\\
\dfrac{1}{\sqrt{2}} \left(Y_0^* \cos \left(\sqrt{2-\sqrt{2}}\,
\tau\right)-X_0^*\sin \left(\sqrt{2-\sqrt{2}}\, \tau\right) \right) \vspace{0.2cm}\\
\dfrac{1}{\sqrt{2-\sqrt{2}}} \left(X_0^* \cos
\left(\sqrt{2-\sqrt{2}}\, \tau\right)+Y_0^*
\sin\left(\sqrt{2-\sqrt{2}}\, \tau\right) \right) \vspace{0.2cm}\\
Y_0^* \cos \left(\sqrt{2-\sqrt{2}}\, \tau\right)-X_0^* \sin
\left(\sqrt{2-\sqrt{2}}\, \tau\right)
\end{array}
\right)
\]
as $\e\to 0$.

\smallskip

Consequently we obtain a periodic solution $(\T_1,\T_2)(\tau,\e)$ of
system \eqref{e2} such that
\[
(\T_1,\T_2)(\tau,\e)\to\left(
\begin{array}{c}
\dfrac{1}{\sqrt{4-2 \sqrt{2}}} \left(X_0^* \cos
\left(\sqrt{2-\sqrt{2}}\, \tau\right)+Y_0^* \sin
\left(\sqrt{2-\sqrt{2}}\, \tau\right)\right) \vspace{0.2cm}\\
\dfrac{1}{\sqrt{2-\sqrt{2}}} \left(X_0^* \cos
\left(\sqrt{2-\sqrt{2}}\, \tau\right)+Y_0^*
\sin\left(\sqrt{2-\sqrt{2}}\, \tau\right) \right) \vspace{0.2cm}
\end{array}
\right)
\]
as $\e\to 0$. Hence Theorem \ref{t1} is proved.
\end{proof}

\smallskip

\begin{proof}[Proof of Theorem \ref{t1a}]
This proof is completely analogous to the proof of Theorem \ref{t1}.
\end{proof}

\section{Proof of the two corollaries}\label{s4}

\begin{proof}[Proof of Corollary \ref{c1}]
Under the assumptions of Corollary \ref{c1} the nonlinear system
\eqref{e3} becomes
\[
\begin{array}{l}
\CG_1(X_0,Y_0)= -\dfrac{\pi  \left(X_0^2+3 Y_0^2+8
\left(-2+\sqrt{2}\right)\right)}{8 \left(2-\sqrt{2}
\right)^{3/2}}, \vspace{0.2cm}\\
\CG_2(X_0,Y_0)= -\dfrac{\pi  X_0 Y_0}{4
\left(2-\sqrt{2}\right)^{3/2}}.
\end{array}
\]
This system has the following four solutions
\[
(X_0^*,Y_0^*)= \left(\pm 2 \sqrt{2 \left(2-\sqrt{2}\right)},0
\right) \,\,\, \mbox{and} \,\,\, (X_0^*,Y_0^*)= \left(0, \pm 2
\sqrt{\frac{2}{3} \left(2-\sqrt{2}\right)} \right).
\]
But the solutions which differs in a sign are different initial
conditions of the same periodic solution of the unperturbed double
pendulum. Moreover, it easy to check that these solutions are
simple. So, by Theorem \ref{t1} we only have two periodic solutions
of the perturbed double pendulum. This completes the proof of the
corollary.
\end{proof}

\begin{proof}[Proof of Corollary \ref{c2}]
Under the assumptions of Corollary \ref{c2} the nonlinear system
\eqref{e3a} becomes
\[
\begin{array}{l}
\CG^1(Z_0,W_0)= -\dfrac{\pi  \left(\sqrt{\left(10-7 \sqrt{2}\right)
\left(2+\sqrt{2}\right)} W_0-8\right) Z_0}{4 \sqrt{2
\left(2+\sqrt{2}\right)}}, \vspace{0.2cm}\\
\CG^2(Z_0,W_0)= \dfrac{\pi  \left(\sqrt{2} W_0^2-2 W_0^2-16 W_0+3
\sqrt{2} Z_0^2-6 Z_0^2\right)}{8 \sqrt{2 \left(2+\sqrt{2}\right)}}.
\end{array}
\]
This system only has the following two real solutions
\[
((Z_0^*,W_0^*)= \left(0, -8 \left(2+\sqrt{2}\right)\right) \,\,\,
\mbox{and} \,\,\, (Z_0^*,W_0^*)= (0,0).
\]
the other two solutions are not real. But the solution $(0,0)$ is
not valid. Therefore, by Theorem \ref{t1a} we only have one periodic
solution of the perturbed double pendulum. This completes the proof
of the corollary.
\end{proof}

\section*{Appendix: Basic results on averaging theory}\label{ap}

In this appendix we present the basic result from the averaging
theory that we shall need for proving the main results of this
paper.

\smallskip

We consider the problem of the bifurcation of $T$--periodic
solutions from differential systems of the form
\begin{equation}\label{eq:4}
\dot {\bf x}(t)= G_0(t,{\bf x})+\e G_1(t,{\bf x})+\e^2 G_{2}(t,{\bf
x}, \e),
\end{equation}
with $\e=0$ to $\e\not= 0$ sufficiently small. Here the functions
$G_0,G_1: \R \times \Omega \to \R^n$ and $G_{2}:\R\times \Omega
\times (-\e_0,\e_0)\to \R^n$ are $\CC^2$ functions, $T$--periodic in
the first variable, and $\Omega $ is an open subset of $\R^n$. The
main assumption is that the unperturbed system
\begin{equation}\label{eq:5}
\dot {\bf x}(t)= G_0(t,{\bf x}),
\end{equation}
has a submanifold of periodic solutions. A solution of this problem
is given using the averaging theory.

\smallskip

Let ${\bf x}(t,{\bf z},\e)$ be the solution of the system
\eqref{eq:5} such that ${\bf x}(0,{\bf z},\e)= {\bf z}$. We write
the linearization of the unperturbed system along a periodic
solution ${\bf x}(t,{\bf z},0)$ as
\begin{equation}\label{eq:6}
\dot {\bf y}=D_{\bf x}{G_0}(t,{\bf x}(t,{\bf z},0)){\bf y}.
\end{equation}
In what follows we denote by $M_{\bf z}(t)$ some fundamental matrix
of the linear differential system \eqref{eq:6}, and by
$\xi:\R^k\times \R^{n-k}\to \R^k$ the projection of $\R^n$ onto its
first $k$ coordinates; i.e. $\xi(x_1,\ldots,x_n)= (x_1,\ldots,x_k)$.

\smallskip

We assume that there exists a $k$--dimensional submanifold
$\mathcal{Z}$ of $\Omega$ filled with $T$--periodic solutions of
\eqref{eq:5}. Then an answer to the problem of bifurcation of
$T$--periodic solutions from the periodic solutions contained in
$\mathcal{Z}$ for system \eqref{eq:4} is given in the following
result.

\begin{theorem}\label{tt}
Let $V$ be an open and bounded subset of $\R^k$, and let $\beta:
\cl(V)\to \R^{n-k}$ be a $\CC^2$ function. We assume that
\begin{itemize}
\item[(i)] $\mathcal{Z}=\left\{ {\bf z}_{\alpha}=\left( \alpha,
\beta(\alpha)\right),~~\alpha\in \cl(V) \right\}\subset \Omega$ and
that for each ${\bf z}_{\alpha}\in \mathcal{Z}$ the solution ${\bf
x}(t,{\bf z}_{\alpha})$ of \eqref{eq:5} is $T$--periodic;

\item[(ii)] for each ${\bf z}_{\alpha}\in \mathcal{Z}$ there
is a fundamental matrix $M_{{\bf z}_{\alpha}}(t)$ of \eqref{eq:6}
such that the matrix $M_{{\bf z}_{\alpha}}^{-1}(0)- M_{{\bf
z}_{\alpha}}^{-1}(T)$ has in the upper right corner the $k\times
(n-k)$ zero matrix, and in the lower right corner a $(n-k)\times
(n-k)$ matrix $\Delta_{\alpha}$ with $\det(\Delta_{\alpha})\neq 0$.
\end{itemize}
We consider the function $\CG:\cl(V) \to \R^k$
\begin{equation}\label{eq:7}
\CG(\alpha)=\xi\left( \dfrac{1}{T} \int _0^T M_{{\bf
z}_{\alpha}}^{-1}(t)G_1(t,{\bf x}(t,{\bf z}_{\alpha})) dt\right).
\end{equation}
If there exists $a\in V$ with $\CG(a)=0$ and $\displaystyle{\det
\left( \left({d\CG}/{d\alpha}\right)(a)\right)\neq 0}$, then there
is a $T$--periodic solution $\varphi (t,\e)$ of system \eqref{eq:4}
such that $\varphi(0,\e)\to {\bf z}_a$ as $\e\to 0$.
\end{theorem}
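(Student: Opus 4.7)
The plan is a Lyapunov--Schmidt reduction of the fixed-point equation for the time-$T$ flow of \eqref{eq:4}. Let ${\bf x}(t,{\bf z},\e)$ denote the solution of \eqref{eq:4} with ${\bf x}(0,{\bf z},\e) = {\bf z}$; a $T$-periodic solution corresponds to a zero of the displacement map $\CF({\bf z},\e) := {\bf x}(T,{\bf z},\e) - {\bf z}$. I would first Taylor expand in $\e$: since ${\bf x}(T,{\bf z}_\alpha,0) = {\bf z}_\alpha$, variation of parameters applied to the variational equation \eqref{eq:6} with forcing $G_1(t,{\bf x}(t,{\bf z}_\alpha))$ and zero initial condition gives
\[
\p_\e {\bf x}(T,{\bf z}_\alpha,0) = M_{{\bf z}_\alpha}(T)\int_0^T M_{{\bf z}_\alpha}^{-1}(s)\, G_1(s,{\bf x}(s,{\bf z}_\alpha))\, ds,
\]
while $\p_{\bf z}{\bf x}(T,{\bf z}_\alpha,0) = M_{{\bf z}_\alpha}(T)M_{{\bf z}_\alpha}^{-1}(0)$. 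I would then parameterize points near $\mathcal{Z}$ by ${\bf z} = (\alpha,\beta(\alpha)+\e v)$ with $\alpha \in V$ near $a$ and $v \in \R^{n-k}$, substitute into $\CF = 0$, and multiply by $M_{{\bf z}_\alpha}^{-1}(T)/\e$ to convert the equation into
\[
\bigl[M_{{\bf z}_\alpha}^{-1}(0) - M_{{\bf z}_\alpha}^{-1}(T)\bigr]\begin{pmatrix}0\\ v\end{pmatrix} + \int_0^T M_{{\bf z}_\alpha}^{-1}(s)\, G_1(s,{\bf x}(s,{\bf z}_\alpha))\, ds + O(\e) = 0.
\]

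The next step is to exploit the block structure of $M_{{\bf z}_\alpha}^{-1}(0) - M_{{\bf z}_\alpha}^{-1}(T)$. Hypothesis (ii) supplies a vanishing upper-right block and an invertible lower-right block $\Delta_\alpha$; crucially, the upper-left $k\times k$ block also vanishes automatically. Indeed, differentiating the identity ${\bf x}(T,{\bf z}_\alpha,0) = {\bf z}_\alpha$ in $\alpha$ gives $(M_{{\bf z}_\alpha}(T)M_{{\bf z}_\alpha}^{-1}(0) - I)\,\p_\alpha{\bf z}_\alpha = 0$, hence $(M_{{\bf z}_\alpha}^{-1}(0) - M_{{\bf z}_\alpha}^{-1}(T))\,\p_\alpha{\bf z}_\alpha = 0$; since the top $k$ rows of $\p_\alpha{\bf z}_\alpha$ form the identity $I_k$, this forces the upper-left block to be zero. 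Splitting the reduced equation into its first $k$ and last $n-k$ components now yields, at leading order, an upper block
\[
T\,\CG(\alpha) + O(\e) = 0
\]
and a lower block $\Delta_\alpha v + \eta(\alpha) + O(\e) = 0$ for a smooth function $\eta$, the latter determining $v$ uniquely as a smooth function of $(\alpha,\e)$ by invertibility of $\Delta_\alpha$.

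Finally I would close the argument by applying the implicit function theorem to the combined system in the unknowns $(\alpha,v)$ near $(a,v_*)$, where $v_* := -\Delta_a^{-1}\eta(a)$. Its Jacobian at $\e = 0$ is block triangular with diagonal blocks $T \cdot d\CG(a)$ and $\Delta_a$, both invertible by assumption, so there is a unique smooth branch $(\alpha(\e),v(\e))$ with $(\alpha(0),v(0)) = (a,v_*)$. Setting ${\bf z}(\e) := (\alpha(\e),\beta(\alpha(\e)) + \e v(\e))$ produces the desired $T$-periodic solution $\varphi(t,\e) := {\bf x}(t,{\bf z}(\e),\e)$, with $\varphi(0,\e) \to {\bf z}_a$ as $\e \to 0$. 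The main subtlety is establishing the automatic vanishing of the upper-left block, since once that is in place the reduction of the full $n$-dimensional fixed-point problem to the $k$-dimensional bifurcation equation $\CG(\alpha) = 0$ is essentially forced and the remainder is a routine Taylor expansion together with the implicit function theorem.
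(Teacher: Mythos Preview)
The paper does not actually prove Theorem~\ref{tt}: it is stated in the appendix and attributed to Malkin and Roseau, with a reference to \cite{BFL} for a shorter proof. So there is no proof in the paper to compare against.

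Your sketch is a correct Lyapunov--Schmidt reduction and is essentially the argument one finds in \cite{BFL}. The key steps --- the variation-of-parameters formula for $\p_\e{\bf x}(T,{\bf z}_\alpha,0)$, the parameterization ${\bf z}=(\alpha,\beta(\alpha)+\e v)$, and especially the observation that the upper-left $k\times k$ block of $M_{{\bf z}_\alpha}^{-1}(0)-M_{{\bf z}_\alpha}^{-1}(T)$ vanishes automatically because $\p_\alpha{\bf z}_\alpha$ lies in its kernel --- are all sound. After that the problem does reduce to $T\,\CG(\alpha)+O(\e)=0$ in the first $k$ components and an equation solvable for $v$ in the last $n-k$, and the block-triangular Jacobian at $(\alpha,v,\e)=(a,v_*,0)$ makes the implicit function theorem go through. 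One small point worth making explicit when you write this out in full: the $O(\e)$ remainders must be shown to be $\CC^1$ uniformly in $(\alpha,v)$ near $(a,v_*)$, which is where the $\CC^2$ hypotheses on $G_0$, $G_1$, $G_2$ and $\beta$ are used.
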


Theorem~\ref{tt} goes back to Malkin \cite{Ma} and Roseau \cite{Ro},
for a shorter proof see \cite{BFL}.

\section*{Acknowledgements}

The first author is partially supported by a MICIIN/FEDER grant
MTM2008--03437, by a AGAUR grant number 2009SGR-0410 and by ICREA
Academia. The second author is partially suported by the grant
FAPESP 2011/03896-0 The third author is partially supported by a
FAPESP--BRAZIL grant 2007/06896--5. The first and third authors are
also supported by the joint project CAPES-MECD grant
PHB-2009-0025-PC

\end{document}